\newtheorem{theorem}{Theorem}[section]
\newtheorem{definition}[theorem]{Definition}
\newtheorem{lemma}[theorem]{Lemma}
\newtheorem{remark}[theorem]{Remark}
\title{\bf Conformal mappings revisited in the octonions and Clifford algebras of arbitrary dimension}
\author{
Rolf S\"{o}ren Krau{\ss}har\\
Fachbereich Mathematik\\
Erziehungswissenschaftliche Fakult\"at\\
Universit\"at Erfurt\\
Nordh\"auser Str. 63\\
99089 Erfurt, Germany\\
soeren.krausshar@uni-erfurt.de }
\begin{document}
\maketitle
\begin{abstract}  
	In this paper we revisit the concept of conformality in the sense of Gauss in the context of octonions and Clifford algebras. We extend a characterization of conformality in terms of a system of partial differential equations and differential forms using special orthonormal sets of continuous functions that have been used before in the particular quaternionic setting. The aim is to describe to which higher dimensional algebras this characterization can exactly be extended and under which circumstances. It turns out to be crucial that this characterization requires a domain of definition that lies in a subalgebra that has the norm composition property and that is either associative (Clifford algebra case) or at least alternative (octonionic case). The orthonormal frames are elements of the spin group Spin$(n+1)$. We round off by relating the nature of the orthonormal frames to the associated M\"obius transformation which are related to $SO(9,1)$ in the octonionic case and to the Ahlfors-Vahlen group in the case of a Clifford algebra.    
\end{abstract}
{\bf Keywords}: conformal maps, octonions, Clifford algebras, M\"obius transformations, variable structural sets, spin groups \\[0.1cm] 
\noindent {\bf Mathematical Review Classification numbers}: 30G35\\

\section{Introduction}

In the two-dimensional setting the set of angle, length and orientation preserving maps over a domain (called conformal maps in the strict sense) coincides with all holomorphic functions that are injective over that domain and can hence be characterized by the Cauchy-Riemann differential equation $\frac{\partial f}{\partial y} = -i \frac{\partial f}{\partial x}$.  If one drops the property of preserving orientation, when we are talking about conformal maps in the sense of Gauss, then one additionally gets the solutions to the conjugated Cauchy-Riemann equation $\frac{\partial f}{\partial y} = i \frac{\partial f}{\partial x}$.

As it is extremely well-known, Liouville's theorem tells us that in Euclidean spaces of real dimension $n\ge 3$ the set of conformal maps is restricted to the set of M\"obius transformations which does not fit within the framework of null-solutions to the generalized Cauchy-Riemann equation $\frac{\partial f}{\partial x_0} + \sum\limits_{j=1}^n e_j \frac{\partial f}{\partial x_j}$ neither in the context of Clifford algebras nor in that of the Cayley-Dickson algebras that are generated by the usual doubling process of doubling the complex numbers in the classical way. 

However, as shown in \cite{BSN,DKM,KraMal2001} for the particular quaternionic case, conformal maps, and hence equivalently M\"obius transformations related to $GL(2,\mathbb{H})$, can be characterized by a different equation that generalizes the Cauchy-Riemann equation in another and different way.  While in the complex plane there only exist two imaginary numbers of lenght $1$, namely $i$ and $-i$, in the quaternions there exists a whole sphere of elements with length $1$ and vanishing real part. Conformal maps in the sense of Gauss turned out to be locally characterizable by the following generalized version of the Cauchy-Riemann system of the form $\frac{\partial f}{\partial x_j} = \Psi_j(z) \frac{\partial f}{\partial x_0}$, $j=1,2,3$ where the elements $\Psi_j(z)$ form an orthonormal system of unit quaternions with vanishing real part. Such an arbitrary orthonormal system is often called a structural set, cf. \cite{SV}.  Furthermore, the elements $\Psi_i(z)$ can locally vary from point to point. In general, they are not constant but continuous functions. In the complex case, this system reduces to the classical Cauchy-Riemann or conjugated Cauchy-Riemann system, because the constants $i$ and $-i$ are the only admissible choices for such functions in $\mathbb{C}$. 

The aim of this paper is to explain to which higher dimensional algebras (beyond the quaternions) this characterization can be carried over and which algebraic conditions are required to achieve this.  

The Cayley-Dickson doubling process generates the quaternions from doubling the complex numbers which in turn arise from doubling the reals. If one continues applying the usual Cayley-Dickson process to the quaternions, then one obtains the non-associative octonions, which nevertheless still form a normed alternative division algebra. In the next step of this doubling construction, the property of being a composition and alternative algebra is lost. Furthermore, one gets zero divisors.  

Another way to extend the quaternions is offered by Clifford algebras which have the advantage of being associative in any dimension. However, one has to deal with zero divisors, as soon as being beyond quaternions. Additionally one loses the multiplicativity of the norm. However, a zero-divisor free subspace where the composition property of the norm still holds is the space of paravectors which is  isomorphic to $\mathbb{R}^{n+1}$. The paravectors generate the multiplicative Clifford group. In turn, the Clifford group contains as special subgroups the famous pin and spin group which are double covers of the $O(n+1)$ and $SO(n+1)$, respectively.      

To establish a generalization of the particular characterization  of conformality as indicated above, we take domains of definition that lie in a subalgebra that has the norm composition property and that is either associative (Clifford algebra case) or at least alternative (octonionic case). 

We show that one can extend this characterization both to the octonionic setting and to the setting of real paravector-valued functions. The structural sets that arise in this generalized context are elements of the spin group Spin$(n+1)$. We round off by relating the nature of the orthonormal frames to the associated M\"obius transformation which are related to $SO(9,1)$ in the octonionic case and to the Ahlfors-Vahlen group $SV(n+1)$ (cf. \cite{EGM87}) in the case of taking a domain that is embedded in a Clifford algebra.  Like in the quaternionic case, the orthonormal frames are only constant if and only if the associated M\"obius transformation is exclusively generated by translations, dilatations and rotations. Whenever a reflection at a sphere is involved, one deals with non-constant but continuous orthonormal structural frames.  

It turns out that this characterization of conformality cannot be carried over to higher dimensional Cayley-Dickson algebras beyond octonions, or to Clifford algebra-valued functions that take other values than paravectors since the argumentation explicitly requires the composition property and the associative or alternative property. 

This paper underlines once more the particularity of the octonionic case in the context of Cayley-Dickson algebras concerning symmetry theories. It currently seems that the octonions provide the most adequate algebra to study the arising symmetries of recent physical attempts of setting up unifying theories in connection with the standard model of particle physics and supergravity, see \cite{Burdik,G,NSM}.     
  
 \section{Clifford algebras vs. octonions}
 There are many possibilities to extend the complex number field to higher dimensions. 
 
 One method is the doubling principle of Cayley-Dickson described in many papers, see for example \cite{Baez,WarrenDSmith}. To start, take two pairs of complex numbers $(a,b)$ and $(c,d)$ and introduce an addition and multiplication operation by  $$
 (a,b)+(c,d) :=(a+c,b+d),\quad\quad (a,b)\cdot (c,d) := (ac-d\overline{b},\overline{a}d+cb). 
 $$ 
 where $\overline{\cdot}$ simply denotes the conjugation (anti-)automorphism  which will be extended by $\overline{(a,b)}:=(\overline{a},-b)$ to the set of pairs $(a,b)$. 
 We now have obtained the Hamiltonian skew field of quaternions. Each quaternion can be written in the form $z=x_0 + x_ 1e_1 + x_2 e_2 + x_3 e_3$ where $e_i^2=-1$ for $i=1,2,3$ and $e_1 e_2 = e_3$, $e_2 e_3 = e_1$, $e_3 e_1 = e_2$ and $e_i e_j = - e_j e_i$ for all mutually  distinct $i,j$ from $\{1,2,3\}$. Obviously, the commutativity has been lost in this doubling process, but $\mathbb{H}$ is still associative.
   
 Applying the next doubling by considering pairs of quaternions we arrive at the octonions $\mathbb{O}$ which are often called Cayley numbers, too. Octonions have the form 
 $$
 z = x_0 + x_1 e_1 + x_2 e_2 + x_3 e_3 + x_4 e_4 + x_5 e_5 + x_6 e_6 + x_7 e_7
 $$
 where $e_4=e_1 e_2$, $e_5=e_1 e_3$, $e_6= e_2 e_3$ and $e_7 = e_4 e_3 = (e_1 e_2) e_3$. 
 Like in the quaternionic case, we have $e_i^2=-1$ for all $i =1,\ldots,7$ and $e_i e_j = -e_j e_i$ for all mutual distinct $i,j \in \{1,\ldots,7\}$. The multiplication is explained by the table
\begin{center}
 \begin{tabular}{|l|rrrrrrr|}
 $\cdot$ & $e_1$&  $e_2$ & $e_3$ & $e_4$ & $e_5$ & $e_6$  & $e_7$ \\ \hline
 $e_1$  &  $-1$ &  $e_4$ & $e_5$ & $-e_2$ &$-e_3$ & $-e_7$ & $e_6$ \\
 $e_2$ &  $-e_4$&   $-1$ & $e_6$ & $e_1$ & $e_7$ & $-e_3$ & $-e_5$ \\
 $e_3$ &  $-e_5$& $-e_6$ & $-1$  & $-e_7$&$e_1$  & $e_2$  & $e_4$ \\
 $e_4$ &  $e_2$ & $-e_1$ & $e_7$ & $-1$  &$-e_6$ & $e_5$  & $-e_3$\\
 $e_5$ &  $e_3$ & $-e_7$ & $-e_1$&  $e_6$&  $-1$ & $-e_4$ & $e_2$ \\
 $e_6$ &  $e_7$ &  $e_3$ & $-e_2$& $-e_5$& $e_4$ & $-1$   & $-e_1$ \\
 $e_7$ & $-e_6$ &  $e_5$ & $-e_4$& $e_3$ & $-e_2$& $e_1$  & $-1$ \\ \hline 	
 \end{tabular}
\end{center}
The octonions are not associative anymore. However, they still form an alternative and composition algebra.  
Although one has no associativity anymore one still has the weaker Moufang rule $(ab)(ca) = a((bc)a)$ for all $a,b,c \in \mathbb{O}$. Putting $c=1$ yields the flexibility condition $(ab)a= a(ba)$. Moreover,  
$$
(a\overline{b})b = \overline{b}(ba) =a(\overline{b}b)=a(b \overline{b})
$$  
for all $a,b \in \mathbb{O}$. 
Next following \cite{CDieckmann} Proposition 1.6, one also has the important rule
$\Re\{b(\overline{a}a)c\} =\Re\{(b \overline{a})(ac)\}$ for all $a,b,c \in \mathbb{O}$ which we will exploit at some crucial points to prove the main results of this paper. 

$\mathbb{R},\mathbb{C},\mathbb{H}$ and $\mathbb{O}$ are the only real normed division algebras. Let $a = a_0 + \sum\limits_{i=1}^7 a_i e_i$ be an octonion represented with the seven imaginary units as mentioned above. We call $a_0$ the real part of $a$ and write $\Re{a} = a_0$. The conjugation leaves the real part invariant, but $\overline{e_j}=-e_j$ for all $j =1,\ldots,7$. On two general octonions $a,b \in \mathbb{O}$ one has $\overline{a\cdot b} = \overline{b}\cdot \overline{a}$. 

The Euclidean norm and the Euclidean scalar product from $\mathbb{R}^8$ naturally extends to the octonionic case by $\langle a,b \rangle := \sum\limits_{i=0}^7 a_i b_i = \Re\{a \overline{b}\}$ and $|a|:= \sqrt{\langle a,a\rangle} = \sqrt{\sum\limits_{i=0}^7 a_i^2}$. We have the important norm composition property $|a\cdot b|=|a|\cdot|b|$ for all $a,b \in \mathbb{O}$. Every non-zero element $a \in \mathbb{O}$ is invertible with $a^{-1} =\overline{a}/|a|^2$. 

Unfortunately, if one continues applying the Cayley-Dickson doubling process, then one even loses the properties of a composition and an alternative algebra and one gets zero-divisors. 
\par\medskip\par
A parallel world alternatively to Cayley-Dickson algebras of higher dimensional generalizations of complex numbers is offered by the construction of Clifford algebras. In contrast to Cayley-Dickson algebras, {\em all} Clifford algebras remain associative. Let us follow for instance \cite{bds}. Suppose that $\{e_1,e_2,\ldots,e_n\}$ is the standard orthonormal basis of the Euclidean vector space $\mathbb{R}^n$. The associated real Clifford algebra $Cl_n$ is the free algebra generated by $\mathbb{R}^{n}$ modulo the relation $a^2 = -|a|^2 e_0$ where $e_0=1$ is its multiplicative neutral element. The relation implies the multiplication rules $e_i^2=-1$ for all $i=1,\ldots,n$ and $e_i e_j = -e_j e_i$ for all $i \neq j$. 

An arbitrary element $a$ of the Clifford algebra $Cl_n$ can be written in the form $a = \sum_{A \subseteq \{1,\ldots,n\}} a_A e_A$ where the basis elements $e_A$ are products of the elements $e_i$, i.e. $e_A := e_{l_1} e_{l_2} \ldots e_{l_n}$ where 
$1 \le l_1 < \cdots < l_r \le n$, $e_{\emptyset} = e_0 = 1$. So, 
$$
a = a_0 + a_1 e_1 + \cdots + a_n e_n + a_{1,2} e_1 e_2 + \cdots  + a_{n-1,n} e_{n-1} e_n + \cdots + a_{1,2,\ldots,n} e_1 e_2 \cdots e_n.
$$ 
The conjugation anti-automorphism can be defined on the full Clifford algebra by $e_j=-e_j$ for each $j=1,\ldots,n$ and $\overline{a b} = \overline{b} \; \overline{a}$ for each $a,b \in Cl_n$.  We will also use the reversion anti-autmorphism which is defined by $e_j^*=e_j$ for each $j=1,\ldots,n$ and $(a b)^* = b^*  a^*$ for any $a,b \in Cl_n$
We further denote by Sc$(a):=a_0$ the scalar part of $a$ and by $Vec(a) :=\sum_i^n a_i e_i$ the vector part of $a$. Subsequentially, $a_{1,2} e_{1} e_2 + \cdots + a_{n-1,n} e_{n-1} e_n$ is the bivector part and so on. The space $\mathbb{R} \oplus \mathbb{R}^{n}$ is called the space of paravectors and will be identified with $\mathbb{R}^{n+1}$ in what follows. The space of all elements of the Clifford algebra that only have a scalar part, a vector part and a bi-vector part is denoted by $\mathbb{R} \oplus \mathbb{R}^{n} \oplus \mathbb{R}^{n,n}$ and is also important in the sequel of this paper. 

The first Clifford algebras $Cl_1$ and $Cl_2$ are isomorphic to $\mathbb{C}$ and $\mathbb{H}$, respectively. Unfortunately, up from $Cl_n$ with $n \ge 3$ one loses the property of a composition algebra and one gets zero-divisors. However, the subspace of paravectors is still normed and zero-divisor free. The Euclidean norm naturally extends to $\mathbb{R} \oplus \mathbb{R}^n$ by $|a| = \sqrt{\sum_{i=0}^n a_i^2}$ and every non-zero paravector is invertible with $a^{-1} = \overline{a}/|a|^2$. On the whole Clifford algebra one can introduce a scalar product of the form $\langle a,b\rangle := {\rm Sc}(a \overline{b})$ and a (pseudo-)norm $|a| := \sqrt{{\rm Sc}(a \overline{a})}$. However, on the general Clifford algebra, one only has an inequality of the form $|ab| \le 2^{n/2} |a||b|$ for $a,b \in Cl_n$.    

Nevertheless, the composition property $|ab| = |a||b|$ remains true for all paravectors $a,b \in \mathbb{R}^{n+1}$ and even for products of paravectors. This will also be crucially applied in the sequel of this paper.

\section{Main results}

This section provides us with a generalization some of the results presented for the quaternionic case in \cite{BSN,DKM,KraMal2001} to the non-associative octonionic case on the one hand and to arbitrary dimensional paravector spaces in associative Clifford algebras on the other hand. 

As already pointed out in the introduction, this topic in the octonionic setting attracts in the recent time a lot of interest from physicists in the context of extensions of the standard model of particle physics and super gravity, see for instance \cite{Burdik,G,NSM}.  

It should also be clearly mentioned that there are substantial differences between the related octonionic function theories and function theories in Clifford algebras which is also a topic of recent interest, see for instance \cite{KO2018,KO2019,Kra2019}. As classical references we also recommend \cite{GTBook,Nono,XL2000}. 

Following classical textbooks like \cite{BG} and others, as well as our previous work  from the quaternionic setting, we can address conformality in the octonionic setting in the following way:
\begin{definition}
Let $D \subseteq \mathbb{O}$ be a domain. Suppose that $f:D\to \mathbb{O}$ is at least a $C^1(D)$-function, that means that each real component function is supposed to be real differentiable over $D$. Then $f$ is called conformal (in the sense of Gauss) if there is a real positive-valued scaling function $\lambda:D \subseteq \mathbb{O} \to \mathbb{R}^{>0}$ with $|df|^2 = \lambda(z) |dz|^2$, 
where $|\cdot|$ is the usual Euclidean norm from $\mathbb{R}^8$ carried over to the octonions $\mathbb{O}$.  
\end{definition}
The differentials $dz$ and $df$ are defined as usual. $dz$ is an octonionic $1$-form: $$dz = dx_0 + e_1 dx_1 + \cdots + e_7 dx_7$$ and $df = \sum\limits_{i=0}^7\frac{\partial f}{\partial x_i} dx_i$. Note that the partial derivatives $\frac{\partial f}{\partial x_i}$ are octonion-valued. 

Analogously, one defines in the framework of paravector-valued functions embedded in an associative Clifford algebra:
\begin{definition}
Let $D \subseteq \mathbb{R}^{n+1} \subset Cl_n$ be a domain. A paravector-valued $C^1(D)$-function $f:D \to \mathbb{R}^{n+1}$ is called conformal (in the sense of  Gauss) if there is a real positive-valued scaling function $\lambda:D \subseteq \mathbb{R}^{n+1} \to \mathbb{R}^{>0}$ with $
|df|^2 = \lambda(z) |dz|^2,$ 
where $|\cdot|$ is the usual Euclidean norm from the paravector space $\mathbb{R}^{n+1} :=\mathbb{R} \oplus \mathbb{R}^{n}$.  
\end{definition} 
The differentials are defined in the same way as above, compare also with  \cite{Mal1991}. 

One can show in both cases:
\begin{lemma}
Let $D \subseteq \mathbb{O}$ or $D \subseteq \mathbb{R}^{n+1}$, respectively, be a domain. A $C^1(D)$-function $f$ is conformal in the sense of Gauss if and only if it satisfies at each point $z \in D$:
\begin{eqnarray}
\Big| \frac{\partial f}{\partial x_0}\Big| &=& \Big| \frac{\partial f}{\partial x_i}\Big|,\quad \forall i=1,\ldots,7,\;({\rm resp.}\;\; \forall i=1,\ldots,n)\\
\Re\Big(\frac{\partial f}{\partial x_i} \frac{\partial \overline{f}}{\partial x_j}\Big) &=& 0,  \forall i \neq j,  
\;\;i,j \in \{0,1,\ldots,7\}\\
\Bigg({\rm Sc}\Big(\frac{\partial f}{\partial x_i} \frac{\partial \overline{f}}{\partial x_j}\Big) &=& 0\;(\rm{resp.}\;\;i,j \in \{0,1,\ldots,n\}).\Bigg) \nonumber 
\end{eqnarray}
\end{lemma}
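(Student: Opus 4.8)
The plan is to expand $|df|^2$ directly and compare it with $\lambda(z)\,|dz|^2$ coefficient by coefficient in the differentials $dx_i\,dx_j$. First I would write $df = \sum_{i} \frac{\partial f}{\partial x_i}\,dx_i$ and note that, since the $dx_i$ are real-valued $1$-forms and the partial derivatives are octonion- (resp.\ paravector-) valued, the squared norm is the real quadratic form
\begin{equation}
|df|^2 = \Big\langle \sum_i \tfrac{\partial f}{\partial x_i} dx_i,\; \sum_j \tfrac{\partial f}{\partial x_j} dx_j \Big\rangle = \sum_{i,j} \Big\langle \tfrac{\partial f}{\partial x_i}, \tfrac{\partial f}{\partial x_j}\Big\rangle\, dx_i\, dx_j,
\end{equation}
where $\langle a,b\rangle = \Re\{a\overline b\}$ (resp.\ ${\rm Sc}(a\overline b)$). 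On the other side, $|dz|^2 = dx_0^2 + dx_1^2 + \cdots + dx_7^2$ (resp.\ $+\cdots+dx_n^2$), because the imaginary units $e_i$ all have norm $1$ and are mutually orthogonal; this uses only the Euclidean structure carried over from $\mathbb{R}^8$ (resp.\ $\mathbb{R}^{n+1}$), not any multiplicative property.

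Next I would argue the equivalence. If $f$ is conformal, then matching the symmetric bilinear forms $\sum_{i,j}\langle \partial_i f,\partial_j f\rangle\,dx_i dx_j$ and $\lambda(z)\sum_i dx_i^2$ forces $\langle \partial_i f,\partial_j f\rangle = 0$ for $i\neq j$ and $\langle \partial_i f,\partial_i f\rangle = |\partial_i f|^2 = \lambda(z)$ for every $i$; the latter equalities give simultaneously $(1)$ (all the $|\partial_i f|$ agree, their common value being $\sqrt{\lambda(z)}$) and, once combined with $\langle a,b\rangle = \Re\{a\overline b\}$, the off-diagonal vanishing $\Re(\partial_i f\,\overline{\partial_j f}) = 0$, which is $(2)$ (resp.\ the Sc-version). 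Conversely, if $(1)$ and $(2)$ hold at $z$, set $\lambda(z) := |\partial_{x_0} f(z)|^2 > 0$ (strict positivity is where one uses that $f$ is genuinely $C^1$ with nondegenerate differential, or one simply observes $\lambda\ge 0$ and notes the degenerate case separately); then the cross terms in $|df|^2$ vanish and the diagonal terms are all equal to $\lambda(z)$, so $|df|^2 = \lambda(z)|dz|^2$. Continuity of $\lambda$ follows from continuity of $\partial_{x_0} f$.

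The only subtlety — and the step I expect to need the most care — is the passage between the scalar product $\langle a,b\rangle$ and the expression $\Re\{a\overline b\}$ (resp.\ ${\rm Sc}(a\overline b)$), and the symmetry $\Re\{a\overline b\} = \Re\{b\overline a\}$ that lets one read the off-diagonal conditions as stated for $i\neq j$ without worrying about the order of $i,j$. This is immediate from $\overline{a\overline b} = b\overline a$ together with $\Re\{\overline{c}\} = \Re\{c\}$, and holds in $\mathbb{O}$ and in $Cl_n$ alike since it involves no products of more than two factors; so no associativity or composition property is actually needed for this particular lemma (those enter only in the later theorems). I would remark that the computation is formally identical in the octonionic and the paravector cases, the sole change being the range of indices and writing ${\rm Sc}$ in place of $\Re$, so it suffices to carry it out once.
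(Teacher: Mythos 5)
Your proof is correct and follows essentially the same route as the paper: expand $|df|^2$ as a real symmetric bilinear form in the $dx_i$ and match coefficients against $\lambda(z)\sum_i dx_i^2$; the paper writes the coefficients out in real components $\sum_l \frac{\partial f_l}{\partial x_i}\frac{\partial f_l}{\partial x_j}$, which is exactly your $\langle \partial_i f,\partial_j f\rangle = \Re\{\partial_i f\,\overline{\partial_j f}\}$. Your additional remarks (symmetry of $\Re\{a\overline b\}$, no need for associativity or the composition property here, the caveat about strict positivity of $\lambda$) are accurate and, if anything, slightly more careful than the paper's own write-up.
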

\begin{proof}
	Both for the octonionic case (addressed for $n=7$) and for the paravector-valued cases (addressed for general $n$), we obtain by a direct computation that 
\begin{eqnarray*}
|df|^2 &=& \Big(\sum\limits_{i=0}^n \frac{\partial f}{\partial x_i} dx_i\Big)\cdot \Big(\sum\limits_{i=0}^n \frac{\partial \overline{f}}{\partial x_i} dx_i\Big)\\
& = & \sum\limits_{i=0}^n \Big(\sum\limits_{j=0}^{n} (\frac{\partial f_j}{\partial x_i} )^2  \Big)dx_i^2	+ 
2 \sum\limits_{j<i} \Big(\sum\limits_{l=0}^n \frac{\partial f_l}{\partial x_j}\frac{\partial f_l}{\partial x_i}\Big)dx_j dx_i.
\end{eqnarray*}
The latter expression equals $\lambda(z)|dz|^2 = \lambda(z) \sum\limits_{j=0}^n dx_j^2$ if and only if 
\begin{eqnarray*}
\sum\limits_{j=0}^n (\frac{\partial f_j}{\partial x_i})^2 &=&\lambda(z)\\
\sum\limits_{j=0}^n \frac{\partial f_j}{\partial x_i} \frac{\partial f_j}{\partial x_k} &=& 0,\quad \forall i < k.	
\end{eqnarray*}
\end{proof}
\begin{remark}
Equivalently, one can characterize conformality in the way
$$
\langle \frac{\partial f}{\partial x_i},\frac{\partial f}{\partial x_k}\rangle = \delta_{ik} \lambda(z),
$$		
where $\delta_{ik}$ represents the usual Kronecker symbol.
\end{remark}
As a generalization of the quaternionic case, we can also establish a characterization of conformality in the octonionic case in the following way:
\begin{theorem}
Let $D \subseteq \mathbb{O}$ be a domain. A $C^1(D)$-function $f:D \to \mathbb{O}$ that satisfies  $\frac{\partial f}{\partial x_0} \neq 0$ for all $z \in D$ is conformal in the sense of Gauss if there exist seven $C^0(D)$-functions $\Psi_i:D \to \mathbb{O}$ which obey 
\begin{eqnarray*}
	(i) & & \Re(\Psi_i(z)) = 0,\quad i=1,2,\ldots,7\\
	(ii) & & \langle \Psi_i(z),\Psi_j(z)\rangle = \delta_{ij} \quad \forall i,j\in \{1,2,\ldots,7\}
\end{eqnarray*}
such that 
\begin{equation}
\frac{\partial f}{\partial x_k} = \Psi_k(z) \frac{\partial f}{\partial x_0}.
\end{equation}
\end{theorem}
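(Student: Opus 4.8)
The plan is to deduce the statement from the criterion recorded in the Remark after the Lemma: $f$ is conformal in the sense of Gauss exactly when there is a positive $\lambda$ with $\langle \partial f/\partial x_i,\partial f/\partial x_k\rangle = \delta_{ik}\,\lambda(z)$ for all $i,k\in\{0,1,\dots,7\}$. Write $g:=\partial f/\partial x_0$, which is nowhere zero by hypothesis, and set $\Psi_0:=1$, so that the structural equation reads $\partial f/\partial x_k=\Psi_k g$ for every $k=0,1,\dots,7$. Then $\langle \partial f/\partial x_i,\partial f/\partial x_k\rangle = \Re\big((\Psi_i g)\,\overline{\Psi_k g}\big) = \Re\big((\Psi_i g)(\overline g\,\overline{\Psi_k})\big)$, using $\overline{\Psi_k g}=\overline g\,\overline{\Psi_k}$, and the whole proof reduces to evaluating this real part.

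The key device will be the identity $\Re\{b(\overline a a)c\}=\Re\{(b\overline a)(ac)\}$ quoted in Section 2 from \cite{CDieckmann}. I would apply it with $a:=\overline g$, so that $\overline a a = g\overline g = |g|^2\in\mathbb R$, obtaining $|g|^2\,\Re\{bc\}=\Re\{(bg)(\overline g c)\}$ for all $b,c\in\mathbb O$. Choosing $b:=\Psi_i$, $c:=\overline{\Psi_k}$ with $i,k\in\{1,\dots,7\}$ gives $\Re\big((\Psi_i g)(\overline g\,\overline{\Psi_k})\big) = |g|^2\,\Re\{\Psi_i\overline{\Psi_k}\} = |g|^2\langle\Psi_i,\Psi_k\rangle = \delta_{ik}|g|^2$ by hypothesis (ii). Choosing instead $b:=1$, $c:=\overline{\Psi_k}$ handles the mixed terms $i=0$, $k\neq 0$: $\Re\big(g(\overline g\,\overline{\Psi_k})\big)=|g|^2\Re\{\overline{\Psi_k}\}=|g|^2\Re\{\Psi_k\}=0$ by hypothesis (i), since conjugation fixes the real part. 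Finally $\langle g,g\rangle=\Re\{g\overline g\}=|g|^2$ directly. Hence the Remark's condition holds with $\lambda(z):=|g(z)|^2=|\partial f/\partial x_0|^2$, which is strictly positive since $\partial f/\partial x_0\neq 0$ and continuous since $f\in C^1(D)$; by the Lemma, $f$ is conformal in the sense of Gauss.

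I expect the only real subtlety to be the bookkeeping forced by non-associativity: one must never regroup a triple octonionic product on a whim, and the argument works only because $g\overline g$ is a real scalar (so it can be moved freely in and out of products), the quoted real-part identity absorbs precisely the one re-bracketing needed, and $|\Psi_k g|=|\Psi_k|\,|g|$ relies on the norm composition property — exactly the structural features (composition together with alternativity) that the paper singles out as indispensable. I would also note in passing that the same computation, with $\Re$ replaced by ${\rm Sc}$, the octonions replaced by a Clifford algebra, and the products taken among paravectors, yields the announced paravector-valued analogue; there associativity makes the re-bracketing automatic, but the composition property of products of paravectors is still what is being used. The converse implication is not part of this statement (the theorem reads "if"), so I would not address it here beyond remarking that the natural candidates would be $\Psi_k:=(\partial f/\partial x_k)(\partial f/\partial x_0)^{-1}$.
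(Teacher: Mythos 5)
Your proposal is correct and follows essentially the same route as the paper: reduce to the orthogonality criterion $\langle \partial f/\partial x_i,\partial f/\partial x_k\rangle=\delta_{ik}\lambda(z)$ from the Lemma and evaluate the inner products via the identity $\Re\{b(\overline a a)c\}=\Re\{(b\overline a)(ac)\}$ together with the composition property, arriving at $\lambda=|\partial f/\partial x_0|^2$. If anything, your version is slightly more complete for the stated ``if'' direction, since you explicitly dispose of the mixed terms $i=0$, $k\neq 0$ using hypothesis $(i)$, whereas the paper's displayed computation only treats $i,j\in\{1,\dots,7\}$.
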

{\it Proof}. Suppose that $f$ is a $C^1(D)$-conformal map in $D$ with $\frac{\partial f}{\partial x_k} \neq 0$ for all $z \in D$. Exploiting the crucial property that the octonions form a division algebra one can well-define for each $k=1,\ldots,7$ at each  single point $z \in D$ the seven functions 
$$
\Psi_k(z) := \Big(\frac{\partial f}{\partial x_k}\Big) \Big(\frac{\partial f}{\partial x_0}\Big)^{-1}.
$$
Since $f \in C^1(D)$, the functions $\frac{\partial f}{\partial x_k}$ and $ \Big(\frac{\partial f}{\partial x_0}\Big)^{-1}$ belong at least to $C^0(D)$. Note that $\frac{\partial f}{\partial x_0} \neq 0$ for all $z \in D$. 

In view of $\Big|\frac{\partial f}{\partial x_0}\Big| = \Big|\frac{\partial f}{\partial x_j}\Big|$ for all $j=1,\ldots,7$ and applying the other crucial fact that $\mathbb{O}$ is a composition algebra where $|ab|=|a||b|$ for all $a,b \in \mathbb{O}$ we get that 
\begin{equation}\label{norm1}
|\Psi_k(z)| = \Bigg|\Big(\frac{\partial f}{\partial x_k}\Big) \Big(\frac{\partial f}{\partial x_0}\Big)^{-1}\Bigg| = \Big|\frac{\partial f}{\partial x_k}\Big|\cdot \Big|  (\frac{\partial f}{\partial x_0})^{-1} 
\Big| = \Big|\frac{\partial f}{\partial x_k}\Big| \cdot \frac{1}{\Big|\frac{\partial f}{\partial x_0}  \Big|} = 1,
\end{equation}
for all $z \in D$. 

Next we observe that 
\begin{eqnarray*}
	\Re(\Psi_k(z)) &=& \Re\Big\{(\frac{\partial f}{\partial x_k})(\frac{\partial f}{\partial x_0})^{-1}   \Big\} = \Bigg\{(\frac{\partial f}{\partial x_k}) \frac{\frac{\partial \overline{f}}{\partial x_0}}{\Big|\frac{\partial f}{\partial x_0}  \Big|^2}   \Bigg\} 
	 =  \frac{1}{\Big|\frac{\partial f}{\partial x_0}  \Big|^2} \Re \Big\{\frac{\partial f}{\partial x_k} \frac{\partial \overline{f}}{\partial x_0}  \Big\} = 0.
\end{eqnarray*}
Conversely, if $|\Psi_k(z)|=1$ for all $ z \in D$ then we have $|\frac{\partial f}{\partial x_0}| = |\frac{\partial f}{\partial x_k}|$ for all $k=1,\ldots,7$. 
\par\medskip\par

Now we recall that in the octonions we have the properties 
\begin{eqnarray}
\langle a,b \rangle &=& \Re\{a\overline{b}\} \label{scalar}\\
\Re\{b(\overline{a}a)c\} &=&\Re\{(b \overline{a})(ac)\} \label{moufang}
\end{eqnarray}
for all octonions $a,b,c \in \mathbb{O}$. In view of these rules we have for all $i,j \in \{1,\ldots,7\}$: 
\begin{eqnarray*}
\Re\Big\{ \frac{\partial f}{\partial x_i}\frac{\partial \overline{f}}{\partial x_j}   \Big\}	 &=& \Big\langle \frac{\partial f}{\partial x_i},\frac{\partial f}{\partial x_j}  \Big\rangle = \Big\langle \Psi_i(z) \frac{\partial f}{\partial x_0}, \Psi_j(z) \frac{\partial f}{\partial x_0}\Big\rangle \\
&=& \Re \Bigg\{\Big(\Psi_i(z) \frac{\partial f}{\partial x_0}\Big)\cdot \overline{\Big(\Psi_j(z) \frac{\partial f}{\partial x_0}\Big)}   \Bigg\}
= \Re \Bigg\{ \Big(\Psi_i(z) \frac{\partial f}{\partial x_0}\Big)\Big(\frac{\partial \overline{f}}{\partial x_0} \overline{\Psi_j(z)}\Big) \Bigg\}\\
& \stackrel{(\ref{moufang})}{=} & \Re \Bigg\{ \Psi_i(z) \cdot \Bigg[\Big(\frac{\partial f}{\partial x_0}  \Big)\Big(\frac{\partial \overline{f}}{\partial x_0}  \Big)  \Bigg]\cdot \overline{\Psi_j(z)}\Bigg\} \\
&=& \Big|\frac{\partial f}{\partial x_0} \Big|^2 \Re \{\Psi_i(z) \overline{\Psi_j(z)}\}\\
&=& \Big|\frac{\partial f}{\partial x_0} \Big|^2 \langle \Psi_i(z),\Psi_j(z)\rangle.
\end{eqnarray*}
\begin{remark}
This proof cannot be extended to the context of higher dimensional Cayley-Dickson algebras beyond the octonions because we explicitly used the property of a composition algebra and the property (\ref{moufang}) that relies on the structure of an alternative algebra. These two properties are essential.
\end{remark}
In the case of a paravector-valued function in an associative Clifford algebra, one can prove a similar statement, but the related structural systems $[\Psi] = (\Psi_1(z),\ldots,\Psi_n(z))$ have a different algebraic structure. They are {\em not} paravector-valued but they are bi-products of two paravectors with no-scalar part having norm $1$. For each single $z \in D$ they turn out to be elements of the spin group Spin$(n+1)$ which is the group generated by products of an even number of paravectors from $\mathbb{R}^{n+1}$ from the Clifford algebra $Cl_n$,  i.e.:
$$
{\rm Spin}(n+1) :=\{ a:= \prod\limits_{i=1}^{2k} a_i\mid a_i \in \mathbb{R}^{n+1},\; |a_i|=1\}.
$$ 
In this setting one has 
\begin{theorem}
Let $D \subseteq \mathbb{R}^{n+1}$ be a domain. A $C^1(D)$-paravector-valued function $f:D \to \mathbb{R}^{n+1} \subset Cl_n$ satisfying $\frac{\partial f}{\partial x_0} \neq 0$ at all $z \in D$ is conformal in the sense of Gauss if and only if there exist $n$ continuous functions $\Psi_1,\ldots,\Psi_n: D \to \mathbb{R}^n \oplus \mathbb{R}^{n,n} \subset {\rm Spin}(n+1)$ with 
\begin{eqnarray*}
	(i)' & & {\rm Sc}(\Psi_i(z)) = 0,\quad i=1,2,\ldots,n\\
	(ii)' & & {\rm Sc} \{ \Psi_i(z) \overline{\Psi_j}(z)\} = \delta_{ij} \quad \forall i,j\in \{1,2,\ldots,n\}
\end{eqnarray*}
at each single point $z \in D$ 
such that
$$
\frac{\partial f}{\partial x_k} = \Psi_k(z) \frac{\partial f}{\partial x_0},\quad k=1,\ldots,n.
$$
\end{theorem}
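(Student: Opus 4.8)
The plan is to transport the octonionic argument of the previous Theorem almost verbatim, the one structural change being that the Moufang-type identity (\ref{moufang}), forced on us there by non-associativity, is now replaced by honest associativity of $Cl_n$; the only genuinely new content is keeping track of the grade structure of the frame functions.

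For the direction ``only if'', I would assume $f$ conformal with $\partial f/\partial x_0 \neq 0$ on $D$. The norm-equality of the preceding Lemma gives $|\partial f/\partial x_k| = |\partial f/\partial x_0| \neq 0$, so every partial is a non-zero paravector, and I would set
$$
\Psi_k(z):=\Big(\frac{\partial f}{\partial x_k}\Big)\Big(\frac{\partial f}{\partial x_0}\Big)^{-1}=\frac{1}{|\partial f/\partial x_0|^2}\Big(\frac{\partial f}{\partial x_k}\Big)\overline{\Big(\frac{\partial f}{\partial x_0}\Big)},
$$
using $a^{-1}=\overline a/|a|^2$ for paravectors. Rewriting this as the product $\big((\partial f/\partial x_k)/|\partial f/\partial x_k|\big)\big(\overline{(\partial f/\partial x_0)}/|\partial f/\partial x_0|\big)$ of two \emph{unit} paravectors --- legitimate since the two norms agree by conformality --- exhibits $\Psi_k(z)\in{\rm Spin}(n+1)$, and continuity is immediate from $f\in C^1(D)$ and $\partial f/\partial x_0 \neq 0$. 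Expanding a product of two paravectors into its grades shows it carries only scalar, vector and bivector parts (the bivector grade being in general non-zero, which is exactly why $\Psi_k$ is a genuine spinor and not a paravector once $n\ge 3$), so $\Psi_k$ takes values in $\R\oplus\R^{n}\oplus\R^{n,n}$; moreover ${\rm Sc}(\Psi_k)=\langle\partial f/\partial x_k,\partial f/\partial x_0\rangle/|\partial f/\partial x_0|^2=0$ by the orthogonality contained in conformality (the Remark), which is $(i)'$ and places $\Psi_k$ in $\R^{n}\oplus\R^{n,n}$. The relation $\partial f/\partial x_k=\Psi_k\,\partial f/\partial x_0$ then holds by construction. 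For $(ii)'$, writing $p:=\partial f/\partial x_0$, $q_k:=\partial f/\partial x_k$ and using only $\overline{ab}=\overline b\,\overline a$, associativity, and $p\overline p=\overline p p=|p|^2\in\R$, one gets $\Psi_i\overline{\Psi_j}=|p|^{-4}q_i\overline p p\,\overline{q_j}=|p|^{-2}q_i\overline{q_j}$, hence ${\rm Sc}\{\Psi_i\overline{\Psi_j}\}=\langle q_i,q_j\rangle/|p|^2=\delta_{ij}\lambda(z)/|p|^2=\delta_{ij}$, since $\lambda(z)=|\partial f/\partial x_0|^2=|p|^2$ by the Lemma. This is precisely the step where associativity does the work that (\ref{moufang}) did in the octonionic case, and it is also why the result breaks for non-paravector Clifford data, where $p\overline p$ need not be scalar.

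For the converse, given continuous $\Psi_1,\dots,\Psi_n$ obeying $(i)'$, $(ii)'$ together with $\partial f/\partial x_k=\Psi_k\,\partial f/\partial x_0$, I would set $\lambda(z):=|\partial f/\partial x_0|^2>0$ and verify $\langle\partial f/\partial x_i,\partial f/\partial x_k\rangle=\delta_{ik}\lambda(z)$ for all $i,k\in\{0,1,\dots,n\}$, which by the Remark is equivalent to conformality. Using $\overline{\Psi_k p}=\overline p\,\overline{\Psi_k}$, associativity and $p\overline p=|p|^2$: for $i,k\ge 1$ one gets $\langle\partial f/\partial x_i,\partial f/\partial x_k\rangle={\rm Sc}\{\Psi_i p\,\overline p\,\overline{\Psi_k}\}=|p|^2{\rm Sc}\{\Psi_i\overline{\Psi_k}\}=\delta_{ik}\lambda(z)$ by $(ii)'$; for $i=0$, $k\ge 1$ one gets $|p|^2{\rm Sc}\{\overline{\Psi_k}\}=|p|^2{\rm Sc}\{\Psi_k\}=0$ by $(i)'$ (conjugation fixing the scalar part); and $i=k=0$ is trivial.

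The arithmetic above is routine; the point that needs genuine care --- and the main obstacle to a clean write-up --- is the \emph{algebraic typing of the frame}: one must recognize that a normalized product of two paravectors is a ${\rm Spin}(n+1)$-element whose grade expansion terminates at the bivector level, and see that $(i)'$--$(ii)'$ are the correct substitutes for the quaternionic conditions, where $\Psi_k$ happened to be a pure unit imaginary quaternion (a paravector with vanishing scalar part), a coincidence that does not survive for $n\ge 3$. One should also check that the scaling factor is forced to be $\lambda(z)=|\partial f/\partial x_0|^2$, so that the ``if'' direction really returns a single well-defined positive function $\lambda$.
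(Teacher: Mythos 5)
Your proposal is correct and follows essentially the same route as the paper: define $\Psi_k = (\partial f/\partial x_k)(\partial f/\partial x_0)^{-1}$ as a product of (unit) paravectors, use the composition property of the norm on paravectors and their products, and let associativity do the job that the Moufang-type identity (\ref{moufang}) did in the octonionic case. You in fact supply more detail than the paper does, notably the explicit computation $\Psi_i\overline{\Psi_j}=|p|^{-2}q_i\overline{q_j}$ and a written-out converse direction, both of which the paper only asserts "can be carried over without any obstacle."
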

\begin{remark}
	The condition $(ii)'$ can be interpreted as orthogonality relation in terms of the usual Clifford inner product from \cite{bds} defined by $\langle a,b\rangle := {\rm Sc} (a\overline{b})$ which of course descends down to the elements of the spin group. 
\end{remark}
\begin{proof}
The proof can be done basically along the same lines as in the octonionic case, putting special emphasis to the following features. 

In constrast to the octonionic setting, the Clifford algebra $Cl_n$ possesses zero divisors for $n\ge 3$. However, all non-zero paravectors from $\mathbb{R}^{n+1}$ are invertible in $Cl_n$ and their inverses are again paravectors. Since $f$ is assumed to be a paravector-valued function, also its partial derivatives are paravector-valued, and therefore $(\frac{\partial f}{\partial x_0})^{-1}$ does exist whenever    	 $\frac{\partial f}{\partial x_0} \neq 0$. So, for each $z$ with $\frac{\partial f}{\partial x_0} \neq 0$ one can well-define the $n$ structural functions $\Psi_k(z) := (\frac{\partial f}{\partial x_k}) (\frac{\partial f}{\partial x_0})^{-1}$ for $k=1,\ldots,n$. Note that these indeed are composed by a product of two paravector valued expressions, since both functions $\frac{\partial f}{\partial x_k}$ and $(\frac{\partial f}{\partial x_0})^{-1}$ are paravector-valued. 

A further difference between the octonionic setting and the Clifford algebra setting consists of the fact that Clifford algebras are   no composition algebras anymore for $n \ge 3$. As mentioned in the previous section, the Clifford norm defined by $|a| = \sqrt{{\rm Sc}(a\overline{a})}$ is only a pseudo norm satisfying an inequality of the form $|ab| \le 2^{n/2} |a||b|$. However, if $a$ and $b$ are paravectors or more generally products of paravectors, then one still has that the composition property $|ab| = |a||b|$. Therefore, we may apply the same argument as in (\ref{norm1}) to conclude from $|\frac{\partial f}{\partial x_0}| = |\frac{\partial f}{\partial x_j}|$ for all $j=1,\ldots,n$ that $|\Psi_k(z)|=1$ in the Clifford norm for each $k=1,\dots,n$ and vice versa, too. 

Since we have the associativity property in the whole Clifford algebra, the second part of the proof stating that   
$$
{\rm Sc} \Big\{ \frac{\partial f}{\partial x_i} \frac{\partial \overline{f}}{\partial x_j}\Big\} = \Big| \frac{\partial f}{\partial x_0}\Big|^2 {\rm Sc} \{
\Psi_i(z) \overline{\Psi_j(z)}
\}, \quad\quad i,j \in \{1,\ldots,n\},\;\i\neq j
$$
can be carried over without any obstacle.  
\end{proof}	
Note that also this proof uses the composition property, so the restriction to paravector-valued functions is essential. 
 \begin{remark}
Note that none of the functions $\Psi_i$ has a scalar (real) part. In the octonionic case the orthonormality property implies that their anticommutator 
$\{\Psi_i,\Psi_j \}:=
\Psi_i(z) \Psi_j(z) + \Psi_j(z) \Psi_i(z)$ vanishes which means that $\Psi_i(z)	\Psi_j(z) = -\Psi_j(z) \Psi_i(z)$. This relation then is translated into the equaton: 
$$
\Bigg[ \Big(\frac{\partial f}{\partial x_i}  \Big)\Big( \frac{\partial f}{\partial x_0}   \Big)^{-1}  \Bigg] \cdot \Bigg[ \Big(\frac{\partial f}{\partial x_j}  \Big)\Big( \frac{\partial f}{\partial x_0}   \Big)^{-1}  \Bigg] = - \Bigg[ \Big(\frac{\partial f}{\partial x_j}  \Big)\Big( \frac{\partial f}{\partial x_0}   \Big)^{-1}  \Bigg] \cdot \Bigg[ \Big(\frac{\partial f}{\partial x_i}  \Big)\Big( \frac{\partial f}{\partial x_0}   \Big)^{-1}  \Bigg].
$$
In the Clifford algebra setting the situation is essentially different. 
For each $z \in D$ the functions $\Psi_i$ are elements of the spin group Spin$(n+1)$ being also mutually orthonormal to each other with respect to the Clifford scalar product. However, only in the special case where they are pure vectors, their anticommutator 
$\{\Psi_i,\Psi_j \}$ vanishes, while in the special case where the functions $\Psi_i$  are pure bivectors, their commutator $[\Psi_i,\Psi_j] := \Psi_i(z) \Psi_j(z) - \Psi_j(z) \Psi_i(z) $ vanishes. 
 Only in these two situations one obtains the special differential relations:
$$
\Bigg[ \Big(\frac{\partial f}{\partial x_i}  \Big)\Big( \frac{\partial f}{\partial x_0}   \Big)^{-1}  \Bigg] \cdot \Bigg[ \Big(\frac{\partial f}{\partial x_j}  \Big)\Big( \frac{\partial f}{\partial x_0}   \Big)^{-1}  \Bigg] = \pm \Bigg[ \Big(\frac{\partial f}{\partial x_j}  \Big)\Big( \frac{\partial f}{\partial x_0}   \Big)^{-1}  \Bigg] \cdot \Bigg[ \Big(\frac{\partial f}{\partial x_i}  \Big)\Big( \frac{\partial f}{\partial x_0}   \Big)^{-1}  \Bigg].
$$  
where we have the minus sign in the case of pure vectors and the plus sign in the case of pure bivectors.

However, in the {\em associative} cases working in a Clifford algebra one may directly simplify this equation into 
\begin{eqnarray*}
\Bigg[ \Big(\frac{\partial f}{\partial x_i}  \Big)\Big( \frac{\partial f}{\partial x_0}   \Big)^{-1}  \Bigg] \cdot \frac{\partial f}{\partial x_j} &=& \pm \Bigg[ \Big(\frac{\partial f}{\partial x_j}  \Big)\Big( \frac{\partial f}{\partial x_0}   \Big)^{-1}  \Bigg]  \cdot \frac{\partial f}{\partial x_i}\\
\Psi_i(z) \cdot \frac{\partial f}{\partial x_j} &=& \pm \Psi_j(z) \frac{\partial f}{\partial x_i}.
	\end{eqnarray*}
Multiplying both sides of the equation from the left with $\overline{\Psi_i(z)}$ leads to 
$$
(\overline{\Psi_i(z)} \Psi_i(z)) \frac{\partial f}{\partial x_j} = \pm \overline{\Psi_i(z)} \Psi_j(z) \frac{\partial f}{\partial x_i}.
$$
In view of the fact that $Sc(\Psi_i(z))  =0$ one has $\Psi_i(z) \overline{\Psi_i(z)}=1$  
Therefore, one may obtain the nice relation
$$
\frac{\partial f}{\partial x_j} = \pm \overline{\Psi_i(z)}\Psi_j(z) \frac{\partial f}{\partial x_i}
$$ 
which further gives
$$
\frac{\partial f}{\partial x_j} = \pm \overline{\Psi_i(z)}\Psi_j(z)\Psi_i(z) \frac{\partial f}{\partial x_0}.
$$
So, in general one has 
$$
\frac{\partial f}{\partial x_j} = (\pm 1)^{l/2} \overline{\Psi^{*}} \Psi_j \Psi^{*} \frac{\partial f}{\partial x_0},
$$
where $\Psi^{*}$ may be any finite product consisting of $2l$ factors built with the basis functions $\Psi_k$ for any $k=1,\ldots,n$. 
 \end{remark}

{\bf Further properties}:\\[0.2cm]
Both in the associative paravector-valued case and in the non-associative octonionic case we can relate the differential of a conformal map $f$ with the differential $dz^{[\Psi]} = dx_0 + \Psi_1 dx_1 + \cdots + \Psi_n dx_n$ where $[\Psi]:=[1,\Psi_1,\ldots,\Psi_n]$ is a local orthonormal basis system that may continuously vary from point to pint. 

More precisely, we get {\em locally} (as well as for the associative paravector-valued case (general $n$) and for the octonionic case $(n=7)$): 
\begin{eqnarray*}
df &=& \frac{\partial f}{\partial x_0} dx_0 +  \frac{\partial f}{\partial x_1} dx_1 + \cdots +  \frac{\partial f}{\partial x_n} dx_n\\
&=& \frac{\partial f}{\partial x_0} dx_0 + \Psi_1 \frac{\partial f}{\partial x_0} dx_1 + \cdots + 	\Psi_n \frac{\partial f}{\partial x_0} dx_n.
\end{eqnarray*}
Thus, we have locally 
$$
df = (dx_0 + \Psi_1 dx_1 + \cdots + \Psi_n dx_n) \frac{\partial f}{\partial x_0},\quad \langle \Psi_i,\Psi_j\rangle = \delta_{ij}
$$
which can be  translated into the form $df = dz^{[\Psi]} \cdot \frac{\partial f}{\partial x_0}$. This relation can formally be interpreted in terms of a generalized differential quotient 
$$
(dz^{[\Psi]})^{-1} (df) = \frac{\partial f}{\partial x_0}.
$$
Here, the linearization is understood with respect to the directions of the local orthonormal basis system $[\Psi]$ which may vary continuously at each point $z \in D$.   

\section{Discussion}

As mentioned in the introduction, the set of conformal maps coincides in the Euclidean spaces $\mathbb{R}^{n+1}$ with the set of the M\"obius transformations which is stated in Liouville's theorem. It can also be proved very elegantly with methods from Clifford algebras using the so-called cog-wheel lemma, cf. \cite{Cnops}. 

For convenience we recall from \cite{Burdik,DM,GTBook,MD} that the octonions offer an elegant representation of the $SO(9,1)$-M\"obius transformations (which can be mapped  bijectively to the set of octonionic conformal maps in the sense of Gauss). These can be written in the form $T(z)=(az+b)(cz+d)^{-1}$ with octonions $a,b,c,d \in \mathbb{O}$ satisfying additionally the so-called Manogue-Schrader conditions, see \cite{DM} for details.

In order to describe conformal maps from $\mathbb{R}^{n+1}$ to $\mathbb{R}^{n+1}$ in the associative Clifford algebra setting, one can desribe them in terms of the ($2\times2$) Clifford Ahlfors-Vahlen matrices. From \cite{Cnops,EGM87} and elsewhere one recalls that every conformal map $T: \mathbb{R}^{n+1} \to \mathbb{R}^{n+1}$ has a representation of the form $T(z) = (\alpha z +\beta)(\gamma z + \delta)^{-1}$ where $\alpha,\beta,\gamma,\delta$ are products of paravectors satisfying $\alpha \delta^{*} - \beta \gamma^{*} \neq 0$ and the Ahlfors-Vahlen conditions $\alpha^{-1}\beta, {\gamma^{-1}} \delta \in \mathbb{R}^{n+1}$ (if $\alpha,\gamma \neq 0$) which guarantee  that $T$ is really a map into $\mathbb{R}^{n+1}$. Such a map represents a transformation from $SO(n+1)$ if an only if $f(z)=\frac{\alpha}{|\alpha|} z \frac{\alpha^{*}}{|\alpha|}$.    

Applying the statements of Theorem 3.5 and Theorem 3.7 to the fact that all conformal maps have the form $f(z)=(az+b)(cz+d)^{-1}$ with certain octonionic coefficients or Clifford algebra valued coefficients, respectively, we can draw some conclusions on the nature of the related structural frame functions $\Psi_i(z)$. 

In the case where $[\Psi] = [e_1,e_2,\ldots,e_7]$ is the constant canonical set of the standard imaginary octonionic units, we have $(dz)^{-1} (df) = \frac{\partial f}{\partial x_0}$. This means, that the limit of the octonionic differential quotient $\lim\limits_{\Delta z \to 0} (\Delta z)^{-1} (\Delta f)$ exists. It equals the value of $\frac{\partial f}{\partial x_0}$ and we deal with right octonionic differentiable functions. However, the latter class is restricted to functions of the form $f(z)=za + b$ with octonions $a\neq 0$ and $b \in \mathbb{O}$. Conversely, a direct computation gives for octonionic functions of the form $f(z)=za+b$ that $\frac{\partial f}{\partial x_k} = e_k a$ and $\frac{\partial f}{\partial x_0} = a$. Thus, $\Psi_k(z) =( e_k a)\cdot a^{-1}$. Since the octonions form an alternative algebra, one particularly has $(e_k a) a^{-1} = e_k \cdot (a a^{-1}) = e_k$. Thus, we arrive at $\Psi_k(z) \equiv e_k$. 

In the case of working with paravector-valued M\"obius transformations in the Clifford algebra setting, a function $f(z)=z \alpha + b$ is only a M\"obius transformation if $\alpha$ is a non-zero real number. Otherwise, $z \alpha$ is not even a paravector. Only in this particular situation, the structural frame $[\Psi]$ equals the constant standard one $[e_1,e_2,\ldots,e_n]$.     

Furthermore, we observe that in the cases where we have a M\"obius transformation of the form $T(z)=(az+b)d^{-1} = (az)d^{-1} + bd^{-1}$ (referring both to the non-associative octonionic and paravector-valued associative case), the partial derivatives of $f$ with respect to the variables $x_0$ and $x_k$ are constants, because we deal with a linear function. In all these cases we get constant orthonormal frames $[\Psi_1,\ldots,\Psi_n]$. However, they are not equal to the standard orthonormal frame $[e_1,\ldots,e_n]$ because otherwise we would be in the case of right differentiable functions described above. 

Finally, as soon as one deals with M\"obius transformations of the form $f(z)=(az+b)(cz+d)^{-1}$ the partial derivatives are not constants anymore. Hence, the related structural functions $\Psi_k(z)$ are not constants, but continuous functions. 

\section{Acknowledgement} I wish to express my gratitude to Professor John Ryan from the University of Arkansas (Fayetteville, USA) for having suggested to me to also draw  my attention to this topic in the context of paravectors and for having suggested to me to consider structural sets involving bivector parts.  This input was very important for the development of this paper.

\end{document}